\documentclass[12pt]{amsart}
\usepackage{amssymb}
\usepackage[all]{xy}
\usepackage{color}

\input xy
\xyoption{all}

\textheight 22cm %\voffset -1cm
\textwidth 16cm \hoffset -1.5cm
\usepackage{amscd}
\newtheorem{lemma}{Lemma}[section]

\newtheorem{theorem}[lemma]{Theorem}

\theoremstyle{definition}
\newtheorem{remark}[lemma]{Remark}
\newtheorem{definition}[lemma]{Definition}

\newtheorem{example}[lemma]{Example}

\DeclareMathOperator{\Hom}{Hom}
\DeclareMathOperator{\Ext}{Ext}
\DeclareMathOperator{\End}{End}
\DeclareMathOperator{\abs}{abs}

\begin{document}

\title{Strongness of companion bases for cluster-tilted algebras of finite type}

\author{Karin Baur}
\address{Institut f\"{u}r Mathematik und Wissenschaftliches Rechnen, Universit\"{a}t Graz, Heinrichstrasse
36, A-8010 Graz, Austria} \email{baurk@uni-graz.at}

\author{Alireza Nasr-Isfahani}
\address{Department of Mathematics,
University of Isfahan,
P.O. Box: 81746-73441, Isfahan, Iran and School of Mathematics, Institute for Research in Fundamental Sciences (IPM), P.O. Box: 19395-5746, Tehran,
Iran}
\email{nasr$_{-}$a@sci.ui.ac.ir / nasr@ipm.ir}

\subjclass[2000]{Primary {16G10}, {16G20}, {13F60}, {16S70}; Secondary {05E10}}

\keywords{Cluster-tilted algebra, Companion basis, Indecomposable modules, Dimension vector, Relation-extension algebra, Root system, Euler form}

\begin{abstract} For every cluster-tilted algebra of simply-laced Dynkin type we provide
a companion basis which is strong,
i.e.\ gives the set of dimension vectors of the finitely generated indecomposable
modules for the cluster-tilted algebra.
This shows in particular that every companion basis of a cluster-tilted algebra of simply-laced Dynkin type is strong.
Thus we give a proof of Parsons's conjecture.
\end{abstract}

\maketitle

%%%%%%%%%%%%%%%%%%%%%%%%%%%%%%%%%%%%%%

\section{Introduction}
Cluster algebras were introduced and first investigated by Fomin and
Zelevinsky \cite{FZ1} in order to better understand the dual
canonical basis of the quantised enveloping algebra of a finite
dimensional semisimple Lie algebra. Today cluster algebras are
connected to various subjects including representation theory of
finite dimensional algebras, Poisson geometry, algebraic geometry,
knot theory, Teichm\"{u}ller theory, dynamical systems, mathematical
physics, combinatorics, $\dots$. Cluster categories were introduced
in \cite{BMRRT} (also for type $A_n$ in \cite{CCS}) as a categorical
model for better understanding of the cluster algebras. The
cluster-tilted algebras were introduced by Buan, Marsh and Reiten
\cite{BMR1} have a key role in  the study of cluster categories.
Also an important connection between cluster algebras and
cluster-tilted algebras was established in \cite{BMR2} and
\cite{CCS2}. More precisely it was proved in \cite{BMR2} and
\cite{CCS2} that the quivers of the cluster-tilted algebras of a
given simply-laced Dynkin type are precisely the quivers of the
exchange matrices of the cluster algebras of that type.

Fomin and Zelevinsky \cite{FZ2} classified the cluster algebras of
finite type. Their classification is identical to the Cartan-Killing
classification of semisimple Lie algebras. The conditions of their
classification is hard to check in general. For solving this
problem, Barot, Geiss and Zelevinsky \cite{BGZ} considered the
positive quasi-Cartan matrices which are related to the Cartan
matrices. The main Theorem of \cite{BGZ} shows that the exchange
matrix associated to the quiver of any cluster-tilted algebra of
simply-laced Dynkin type have a positive quasi-Cartan companion.
More precisely they showed that there exists a $\mathbb{Z}$-basis of
roots of the integral root lattice of the corresponding root system
of simply-laced Dynkin type such that the matrix of inner products
associated to this basis creates the positive quasi-Cartan companion.
Parsons in \cite{P} and \cite{P1} called such a $\mathbb{Z}$-basis a
companion basis. Parsons used companion bases to study
indecomposable modules over cluster-tilted algebras. Note that the
knowledge of the indecomposable modules over cluster-tilted algebras is not only
important in the representation theory of cluster-tilted algebras
but also has applications in the corresponding cluster algebra.
Nakanishi and Stella \cite{NS} showed that the set of all dimension
vectors of the indecomposable modules over a cluster-tilted algebra
of finite type, the set of the non-initial $d$-vectors of the corresponding
cluster algebra and the set of positive $c$-vectors of the corresponding
cluster algebra coincide.

For any companion basis of a cluster-tilted algebra of simply-laced
Dynkin type, Parsons defined a collection of positive vectors. Any
positive root of the corresponding root system of simply-laced
Dynkin type is a linear combination of elements of the companion basis with
integer coefficients. That collection is the set of the absolute
values of these coefficients. A companion basis of a cluster-tilted
algebra of simply-laced Dynkin is called strong \cite{P}, \cite{P1}
if the associated collection of positive vectors are precisely the
dimension vectors of the finitely generated indecomposable modules
over the given cluster-tilted algebra. Parsons proved that for any
cluster-tilted algebra of Dynkin type $A$, any companion basis is
strong. He also conjectured that any companion basis of any
cluster-tilted algebra of simply-laced Dynkin type is strong
(Conjecture 6.3 of \cite{P}).

On the other hand Ringel \cite{R} studied finitely generated indecomposable modules over cluster-concealed algebras (i.e. cluster-tilted algebras where the associated cluster-tilting object corresponds to a preprojective tilting module). Note that cluster-tilted algebras of simply-laced Dynkin type are cluster-concealed algebras, but there are many cluster-concealed algebras which are tame or wild. Ringel \cite{R} used a theorem due to Assem et al. \cite{ABS} and Zhu \cite{Z}, which gives an equivalent definition of cluster-tilted algebras. According to that theorem, any cluster-tilted algebra is isomorphic to the relation-extension of some tilted algebra. By using the tilting functor
Ringel defined a linear bijection map $g$ between the Grothendieck group of the corresponding hereditary algebra
$K_0(H)$ and the Grothendieck group of the tilted algebras $K_0(B)$. He proved that the dimension vectors of indecomposable modules over a
cluster-concealed algebra are precisely the vectors $\abs(x)$ of absolute values
with $x\in \Phi_B$, where $\Phi_B$ is the image of the root system $\Phi_H$ of $H$ under $g$ and where
for each $x=(x_1, \cdots, x_n)\in \mathbb{Z}^n$, $\abs(x)=(|x_1|, \cdots, |x_n|)$. He also proved that indecomposable modules over cluster-tilted algebras of simply-laced Dynkin type are uniquely determined by their dimension vectors.

Given a cluster-tilted algebra of simply-laced Dynkin type, according to Ringel's Theorem there
exist $\{x_1, x_2, \cdots, x_n\}\subseteq \Phi_H$ such that $\abs(g(x_i))=e_i$  for each $i$, where $\{e_1, \cdots, e_n\}$ is the canonical basis of
$\mathbb{Z}^n$.
The goal of this paper is to show that the set $\{x_1,x_2,\cdots,x_n\}$ is a companion basis for
the cluster-tilted algebra and that it is strong.
This result not only illustrates the connection between Ringel's results \cite{R} and Parsons's
results \cite{P}, \cite{P1}, it also proves Parsons's conjecture (Conjecture 6.3 of \cite{P}).
Before proving our results in Section~\ref{sec:results} we provide the necessary background in the
following section.

%%%%%%%%%%%%%%%%%%%%%%%%%%%%%%%%%%%%%%

\section{Preliminaries}
 Let $\mathbb{F}=\mathbb{Q}(u_1, \cdots, u_n)$
be the field of rational functions in $n$ indeterminates. Let
$\textbf{x} \subseteq \mathbb{F}$ be a transcendence basis over
$\mathbb{Q}$, and let $C = (c_{xy})_{x,y\in \textbf{x}}$ be an
$n\times n$ sign-skew-symmetric integer matrix with rows and columns
indexed by $\textbf{x}$. The pair $(\textbf{x},C)$ is called a seed.
Given such a seed, and an element $z\in \textbf{x}$, define a new
element $z'\in \mathbb{F}$ via the exchange relation:  $$zz' =
\prod_{x\in \textbf{x}, c_{xz}>0} x^{c_{xz}} + \prod_{x\in
\textbf{x}, c_{xz}<0} x^{-c_{xz}} .$$
Let $\textbf{x}' = \textbf{x}
\cup \{z'\} \backslash \{z\}$, it is a new transcendence basis of
$\mathbb{F}$. Let $C'$ be the mutation of the matrix $C$ in direction
$z$:
\begin{align*}
c_{xy}' = \begin{cases}
-c_{xy} & \text{if }\ x=z\ \text{or}\ y=z,\\
c_{xy}+\frac{|c_{xz}|c_{zy}+c_{xz}|c_{zy}|}{2}& \text{otherwise. }\
\end{cases}
\end{align*}
 The row and column
labeled $z$ in $C$ are relabeled $z'$ in $C'$. The pair
$(\textbf{x}', C')$ is called the mutation of the seed $(\textbf{x}, C)$
in direction $z$. Let $\mathcal{S}$ be the set of seeds obtained by
iterated mutation of $(\textbf{x},C)$. Then the set of cluster
variables is, by definition, the union $\chi$ of all the elements of the transcendence
bases appearing in the seeds in $\mathcal{S}$. These bases are known
as clusters, and the cluster algebra $\mathcal{A}(\textbf{x}, C)$ is
the subring of $\mathbb{F}$ generated by $\chi$. Up to isomorphism
of cluster algebras, it does not depend on the initial choice
$\textbf{x}$ of transcendence basis, so it is just denoted by
$\mathcal{A}_C$. If $\chi$ is finite, the cluster algebra
$\mathcal{A}_C$ is said to be of finite type.

Let $H$ be a hereditary finite dimensional $k$-algebra, where $k$ is
an algebraically closed field and let $\mathcal{D} = D^b({\rm mod} H)$ be
the bounded derived category of finitely generated left $H$-modules
with shift functor $[1]$. Also, let $\tau$ be the $AR$-translation
in $\mathcal{D}$. The cluster category is defined as the orbit
category $\mathcal{C}_H = \mathcal{D} /F$, where $F = \tau^{-1}[1]$.
The objects of $\mathcal{C}_H$ are the same as the objects of
$\mathcal{D}$, but maps are given by $\Hom_{\mathcal{C}_H}(X, Y )
=\bigoplus_{i\in \mathbb{Z}} \Hom_\mathcal{D}(X, F^iY)$. An object $\widetilde{T}$ in
$\mathcal{C}_H$ is called cluster-tilting provided for any object
$X$ of $\mathcal{C}_H$, we have $\Ext^1_{\mathcal{C}_H}(\widetilde{T}, X) = 0$ if
and only if $X$ lies in the additive subcategory ${\rm add}(\widetilde{T})$ of
$\mathcal{C}_H$ generated by $\widetilde{T}$. Let $\widetilde{T}$ be a cluster-tilting
object in $\mathcal{C}_H$. The cluster-tilted algebra associated to
$\widetilde{T}$ is the algebra $\End_{\mathcal{C}_H}(\widetilde{T})^{op}$.

Let $Q$ be a quiver of simply-laced Dynkin type with underlying graph $\Delta$ and $n$ vertices. Let $H=kQ$ and $\widetilde{T}$ be a basic cluster-tilting object in the cluster category $\mathcal{C}_H$. Let $\Lambda=\End_{\mathcal{C}_H}(\widetilde{T})^{op}$ be the corresponding cluster-tilted algebra. Let $\mathcal{A}$ be the cluster algebra of type $\Delta$ and $(\textbf{x},C)$ be the seed in $\mathcal{A}$ corresponding to $\widetilde{T}$. According to Theorem 3.1 of \cite{CCS2} (also Section 6 of \cite{BMR2}), $\Gamma=\Gamma(C)$ is the quiver of $\Lambda$, where $\Gamma(C)$ is the quiver with vertices corresponding to the rows and columns of $C$, and $c_{ij}$ arrows from the vertex $i$ to the vertex $j$ whenever $c_{ij}>0$. Let $V$ be the Euclidean space with positive definite symmetric bilinear form $(-, -)$ and $\Phi_H\subseteq V$ be the root system of Dynkin type $\Delta$.

\begin{definition} (Definitions 4.1, 5.1 and 5.2 of \cite{P})
\begin{itemize}
\item[(1)] The subset $\Psi=\{\gamma_i| 1\leq i\leq n\}\subseteq \Phi_H$ is called a companion basis for $\Gamma=\Gamma(C)$ if satisfies the following properties:
\begin{itemize}
\item[(i)] $\{\gamma_i| 1\leq i\leq n\}$ is a $\mathbb{Z}$-basis for $\mathbb{Z}\Phi_H$,
\item[(ii)] $|(\gamma_i, \gamma_j)|=|c_{ij}|$ for all $i\neq j$, $1\leq i, j\leq n$.
\end{itemize}
\item[(2)] Let $\alpha\in \Phi_H$ and suppose that $\alpha=\sum_{i=1}^nc_i\gamma_i$ with $c_i\in \mathbb{Z}$ for each $i$. $d_\alpha^\Psi$ is defined to be the vector $d_\alpha^\Psi=(|c_1|, \cdots, |c_n|)$.
\item[(3)] The companion basis $\Psi=\{\gamma_i| 1\leq i\leq n\}$ of $\Gamma$ is called strong companion basis if the vectors $d_\alpha^\Psi$ for $\alpha\in \Phi_H^+$ are the dimension vectors of the finitely generated indecomposable $\Lambda$-modules.

\end{itemize}
\end{definition}

Let $\Lambda$ be a cluster-tilted algebra of simply-laced Dynkin type and $\Gamma=\Gamma(C)$ be the quiver of $\Lambda$. Parsons conjectured in Conjecture 6.3 of \cite{P}
that all companion bases for $\Gamma$ are strong. He proved this conjecture for cluster-tilted algebras of simply-laced type $A_n$ (Theorem 5.3 of \cite{P}). Let $\Psi=\{\gamma_i| 1\leq i\leq n\}$ and $\Omega=\{\gamma'_i| 1\leq i\leq n\}$ be two arbitrary
companion bases for $\Gamma$. By Proposition 6.2 of \cite{P}, $\{d_\alpha^\Psi|\alpha\in \Phi_H^+\}=\{d_\alpha^\Omega|\alpha\in \Phi_H^+\}$. This tells us that the existence of a strong companion basis for $\Gamma$ shows that all companion bases of $\Gamma$ are strong.

Assem, Br\"{u}stle and Schiffler \cite{ABS} and Zhu \cite{Z} independently provided a
characterization of
cluster-tilted algebras. They proved that an algebra $\Lambda$ is
cluster-tilted if and only if there exists a tilted algebra $B$ such
that $\Lambda\cong B^c$, where $B^c$ is trivial extension algebra
$B^c = B \ltimes \Ext^2_B(DB,B)$, with $D = \Hom(-, k)$ the
$k$-duality. Recall that a $k$-algebra $B$ is said to be tilted
provided $B$ is the endomorphism ring of a tilting $H$-module $T$,
where $H$ is a finite-dimensional hereditary $k$-algebra. A tilted
algebra $B$ is said to be concealed provided $B$ is the endomorphism
ring of a preprojective tilting $H$-module (i.e. a tilting
$H$-module whose summands are all preprojective). If $B$ is a
concealed algebra, then $B^c$ is called a cluster-concealed algebra.

Let $B=\End_H(T)$ be concealed algebra and $\Lambda=B^c$ the corresponding cluster-concealed algebra and $K_0(H)$ be the Grothendieck group of $H$. We identify $K_0(H)$ with $\mathbb{Z}^n$, where $n$ is the number of isomorphism class of simple $H$-modules. Let $M$ be a finitely generated $H$-module. The dimension vector of $M$ is the vector $\mathbf{dim}\, M\in\mathbb{Z}^n$, whose coefficients are the Jordan-H\"{o}lder multiplicities of $M$. We denote by $<-, ->_H$ the bilinear form on $K_0(H)$ given by
\begin{equation*}
<\mathbf{dim}\,M, \mathbf{dim}\,N>_H=\sum_{i\geq0}(-1)^i \dim_k \Ext_H^i(M, N)
\end{equation*}
for all $H$-modules $M, N$. The corresponding symmetric bilinear form $(-, -)_H$ is defined
as
$(\mathbf{dim}\,M, \mathbf{dim}\,N)_H
=<\mathbf{dim}\,M, \mathbf{dim}\,N>_H+<\mathbf{dim}\,N, \mathbf{dim}\,M>_H$.

Let $T_1, \cdots, T_n$ be indecomposable direct summands of $T$, one from each isomorphism class. Let $g:K_0(H)\rightarrow K_0(B)$ given by $g(x)=(<\mathbf{dim}\, T_i, x>_H)_i$. Then for each $i$, we have $g(\mathbf{dim}\, T_i)=\mathbf{dim}\, G(T_i)$, where $G=\Hom_H(T, -): {\rm mod} H\rightarrow {\rm mod} B$ is a tilting functor. Since $\{\mathbf{dim}\, T_1, \cdots \mathbf{dim}\, T_n\}$ is a basis of $K_0(H)$ and
$\{\mathbf{dim}\, G(T_1), \cdots \mathbf{dim}\, G(T_n)\}$ is a basis of $K_0(B)$, $g$ is a linear bijection.

Let $\Phi_H$ be the root system in $K_0(H)$ corresponding to the underlying graph of the quiver of $H$ and $\Phi_B=g(\Phi_H)$. In Theorem 2 of \cite{R}, Ringel proved that the dimension vectors of the indecomposable $\Lambda$-modules are precisely the vectors $\abs(x)$ with $x\in \Phi_B$.

\begin{remark} Let $\Phi_B^+=g(\Phi_H^+)$, where $\Phi_H^+$ is the set of positive roots of $H$.
For every $x\in \Phi_B$ there exists $a\in \Phi_H$ such that $x=g(a)$. The root $a$ is either positive or negative. If $a$ is negative then $-a\in \Phi_H^+$ and $x=-g(-a)$. Thus $-x\in \Phi_B^+$. Since
$\abs(x)=\abs(-x)$, it is enough to consider positive roots. So according to the Ringel's Theorem,
$\{\mathbf{dim}\, M | M\in ind \Lambda\}=\{\abs(x) | x\in \Phi_B^+\}$.
\end{remark}

In the following example we illustrate the connection between Parsons's results \cite{P} and Ringel's results \cite{R}.

\begin{example}\label{ex1} Consider the quiver $\Gamma$ given by
$$\hskip.5cm \xymatrix{
&&&&{2} \ar@{<-}[dl]_{\alpha}\ar[dr]^{\beta}\\
&{4}\ar@/_1pc/[rrrr]_{\eta}&&{3}\ar[ll]_{\delta}&&{1}\ar[ll]_{\gamma}
}
\hskip.5cm$$
In fact $\Gamma$ is the mutation of $Q$ at vertex $3$, where $Q$ is the quiver
$$\hskip.5cm \xymatrix{
&&{2} \ar[dl]\\
{4}\ar[r]&{3}\ar[rd]&\\
&&{1}} \hskip.5cm$$ Thus $\Gamma$ is a quiver of a cluster-tilted
algebra $\Lambda$ of type $D_4$. More precisely $\Lambda\cong
k\Gamma/I$, where $I$ is an admissible ideal of $k\Gamma$ generated
by $\beta\gamma, \gamma\alpha, \eta\gamma, \gamma\delta,
\alpha\beta-\delta\eta$. By Theorem 6.1 of \cite{P},
$\Psi=\{\alpha_1, \alpha_2+\alpha_3, \alpha_3, \alpha_3+\alpha_4\}$
is a companion basis for $\Gamma$, where $\{\alpha_1, \alpha_2,
\alpha_3, \alpha_4\}$ is a simple system of $\Phi_H$ and $H=kQ$.
Recall that the positive roots $\Phi_H^+$ are
$\{\alpha_1, \alpha_2, \alpha_3, \alpha_4,
\alpha_1+\alpha_3, \alpha_2+\alpha_3, \alpha_3+\alpha_4,
\alpha_1+\alpha_2+\alpha_3, \alpha_1+\alpha_3+\alpha_4,
\alpha_2+\alpha_3+\alpha_4, \alpha_1+\alpha_2+\alpha_3+\alpha_4,
\alpha_1+\alpha_2+\alpha_3+\alpha_3+\alpha_4\}$. In terms of $\Psi$, their dimension
vectors are
$\{d_\alpha^\Psi | \alpha\in\Phi_H^+\}=\{(1, 0, 0, 0), (0, 1, 1, 0),
(0, 0, 1, 0), (0, 0, 1, 1), (1, 0, 1, 0), (0, 1, 0, 0), (0, 0, 0,
1), (1, 1, 0, 0),\\ (1, 0, 0, 1), (0, 1, 1, 1), (1, 1, 1, 1), (1, 1,
0, 1)\}$. On the other hand $\Lambda\cong B\ltimes \Ext^2_B(DB, B)$,
where $B=\End_H(T')$, $T'=P_1\oplus P_2\oplus P'_3\oplus P_4$, $P_1,
P_2, P_3, P_4$ are indecomposable projective $H$-modules and $P'_3$
is the indecomposable $H$-module with the dimension vector $(1, 1,
1, 1)$ (in fact $H\cong \End_H(H)$ and $T'$ is a mutation of $H$ at
$3$). An easy calculation shows that $\Phi_B^+=\{(1, 0, 0, 0), (0,
1, 1, 0), (0, 0, -1, 0), (0, 0, 1, 1), (1, 0, -1, 0), (0, 1, 0, 0),
(0, 0, 0, 1),\\ (1, 1, 0, 0), (1, 0, 0, 1), (0, 1, 1, 1), (1, 1, 1,
1), (1, 1, 0, 1)\}$ and so by the Ringel's Theorem the dimension vectors
of the indecomposable $\Lambda$-modules are precisely $\{\abs(x) |
x\in \Phi_B^+\}=\{(1, 0, 0, 0), (0, 1, 1, 0), (0, 0, 1, 0), (0, 0,
1, 1), (1, 0, 1, 0), (0, 1, 0, 0), (0, 0, 0, 1), (1, 1, 0, 0),\\ (1,
0, 0, 1), (0, 1, 1, 1), (1, 1, 1, 1), (1, 1, 0, 1)\}$. This shows
that $\Psi$ is a strong companion basis for $\Gamma$,
the vectors of absolute values of the images of the
four roots of the companion basis are
$\abs(g(\alpha_1))=(1, 0, 0, 0)$, $\abs(g(\alpha_2+\alpha_3))=(0, 1,
0, 0)$, $\abs(g(\alpha_3))=(0, 0, 1, 0)$ and
$\abs(g(\alpha_3+\alpha_4))=(0, 0, 0, 1)$. In fact we have a companion
basis for $\Gamma$ such that its image under $g$ is the canonical
basis of $K_0(B)=\mathbb{Z}^4$. In the next section we prove this
fact for arbitrary cluster-tilted algebras of simply-laced Dynkin type.
\end{example}

%%%%%%%%%%%%%%%%%%%%%%%%%%%%%%%%%%%%%%

\section{Main Result}\label{sec:results}

Let $\Lambda$ be an arbitrary cluster-tilted algebra of finite type. It is known that there exists $H=kQ$,
$Q$ of Dynkin type $\Delta$, and a cluster-tilting object $\widetilde{T}$ of $\mathcal C_H$ such that
$\Lambda\cong \End_{\mathcal C_H}(\widetilde{T})$. Furthermore, it is known that there exists a tilting
$H$-module $T$ such that $\Lambda\cong B^c$ where $B=\End_H(T)$, cf.~\cite{ABS} or \cite{Z}.

Let $\mathcal{A}$ be a cluster algebra of type $\Delta$, and suppose
that $(\mathbf{x}, C)$ is the seed in $\mathcal{A}$ corresponding to
$\widetilde{T}$. Then $\Gamma=\Gamma(C)$ is the quiver of $\Lambda$ (\cite{BMR2}, \cite{CCS2}).

Let $g:K_0(H)\rightarrow K_0(B)$ be the linear bijection given by
$g(x)=(<\mathbf{dim}\, T_i, x>_H)_i$, and $S_1, S_2, \cdots, S_n$ be
the isomorphism class of simple $\Lambda$-modules. By the Ringel's
Theorem there exist $y_1, \cdots, y_n\in \Phi_B^+$ such that
$\abs(y_i)=\mathbf{dim}\, S_i=e_i$, for each $1\leq i\leq n$. Also for
$1\leq i\leq n$ there exists $x_i\in \Phi_H^+$ such that
$g(x_i)=y_i$.

\begin{theorem}\label{Thm} The subset $\{x_1, \cdots, x_n\}\subseteq \Phi_H^+$
is a companion basis for $\Gamma$.
\end{theorem}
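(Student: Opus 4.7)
The plan is to verify the two defining conditions of a companion basis: (i) $\{x_1,\ldots,x_n\}$ is a $\mathbb{Z}$-basis of $\mathbb{Z}\Phi_H$, and (ii) $|(x_i,x_j)|=|c_{ij}|$ for $i\neq j$. Condition (i) is immediate: since $\abs(y_i)=e_i$ forces $y_i=\epsilon_i e_i$ with $\epsilon_i\in\{\pm 1\}$, the $y_i$ form a $\mathbb{Z}$-basis of $K_0(B)$, and $g$ being a linear bijection transports this to a $\mathbb{Z}$-basis $\{x_i\}=\{g^{-1}(y_i)\}$ of $K_0(H)=\mathbb{Z}\Phi_H$.

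For condition (ii), the key observation is that $g$ coincides with the Grothendieck-group map of the derived equivalence $R\Hom_H(T,-)\colon D^b({\rm mod}\,H)\to D^b({\rm mod}\,B)$: the $i$-th coordinate of $g(\mathbf{dim}\,M)$, namely $\dim\Hom_H(T_i,M)-\dim\Ext^1_H(T_i,M)$, is exactly the $i$-th coordinate of $[R\Hom_H(T,M)]$ in $K_0(B)$. Since derived equivalences preserve Euler forms, $\langle -,-\rangle_H=\langle g(-),g(-)\rangle_B$ and hence after symmetrization $(-,-)_H=(g(-),g(-))_B$. Applying this,
\[
(x_i,x_j)_H=(y_i,y_j)_B=\epsilon_i\epsilon_j\,(\mathbf{dim}\,S_i,\mathbf{dim}\,S_j)_B,
\]
where $S_i$ is the $i$-th simple $B$-module, so the task reduces to proving $|(\mathbf{dim}\,S_i,\mathbf{dim}\,S_j)_B|=|c_{ij}|$ for $i\neq j$.

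The main obstacle is this last identity. Using ${\rm gldim}(B)\leq 2$, the Euler form expands as
\[
(\mathbf{dim}\,S_i,\mathbf{dim}\,S_j)_B=-[\dim\Ext^1_B(S_i,S_j)+\dim\Ext^1_B(S_j,S_i)]+[\dim\Ext^2_B(S_i,S_j)+\dim\Ext^2_B(S_j,S_i)],
\]
while the Assem--Br\"ustle--Schiffler description of $\Lambda=B^c$ identifies the number of arrows $i\to j$ in $\Gamma$ with $\dim\Ext^1_B(S_i,S_j)+\dim\Ext^2_B(S_j,S_i)$, so that
\[
c_{ij}=\dim\Ext^1_B(S_i,S_j)-\dim\Ext^1_B(S_j,S_i)+\dim\Ext^2_B(S_j,S_i)-\dim\Ext^2_B(S_i,S_j).
\]
To match these two expressions in absolute value I plan to combine two acyclicity inputs: (a) $\Gamma$ has no oriented $2$-cycles (being a cluster quiver), forcing at least one of the sums $\dim\Ext^1_B(S_i,S_j)+\dim\Ext^2_B(S_j,S_i)$ and $\dim\Ext^1_B(S_j,S_i)+\dim\Ext^2_B(S_i,S_j)$ to vanish; and (b) the ordinary quiver of the tilted algebra $B$ is acyclic, so that $\dim\Ext^1_B(S_i,S_j)\cdot\dim\Ext^2_B(S_j,S_i)=0$ for every pair $i\neq j$, because a minimal relation from $j$ to $i$ in $B$ requires a path of length at least two which, combined with an arrow $i\to j$ in $B$, would close an oriented cycle. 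Together (a) and (b) leave at most one of the four Ext-dimensions nonzero, after which an elementary case inspection yields $|(\mathbf{dim}\,S_i,\mathbf{dim}\,S_j)_B|=|c_{ij}|$, proving the theorem.
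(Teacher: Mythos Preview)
Your proposal is correct and follows essentially the same strategy as the paper: verify the $\mathbb{Z}$-basis property via the bijectivity of $g$, transport the symmetric Euler form along $g$, expand $(e_i,e_j)_B$ using ${\rm gl.dim}\,B\le 2$, and compare with the Assem--Br\"ustle--Schiffler description of the arrows of $\Gamma=Q_{B^c}$. The only cosmetic difference is in the final case analysis: to show that at most one of the four $\Ext$-dimensions is nonzero, the paper invokes that $Q_{B^c}$ has no multiple arrows (so an arrow $i\to j$ in $Q_B$ forbids any further arrow $i\to j$ in $Q_{B^c}$, killing $\Ext^2_B(S_j,S_i)$), whereas you use acyclicity of $Q_B$ directly to rule out $\Ext^1_B(S_i,S_j)$ and $\Ext^2_B(S_j,S_i)$ being simultaneously nonzero; both inputs are standard and either suffices.
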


\begin{proof} First we show that $\{x_1, \cdots, x_n\}$ is a
$\mathbb{Z}$-basis for $\mathbb{Z}\Phi_H$. Let $x\in
\mathbb{Z}\Phi_H$, then $g(x)\in \mathbb{Z}\Phi_B$, and so
$g(x)=d_1e_1+\cdots+d_ne_n$, where $d_i\in \mathbb{Z}$ for each $i$
and $\{e_1, \cdots, e_n\}$ is the canonical basis of $K_0(B)$. Since
$e_i=\abs(g(x_i))$ for each $i$,
$g(x)=d_1\varepsilon_1g(x_1)+\cdots+d_n\varepsilon_ng(x_n)$ with
$\varepsilon_i\in \{1, -1\}$ for each $i$. Then
$x=d_1\varepsilon_1x_1+\cdots+d_n\varepsilon_nx_n$, since $g$ is a
linear bijection. Now we show that for any $1\leq i, j\leq n$,
$i\neq j$, we have $|(x_i, x_j)_H|=|c_{ij}|$. We have $|(x_i,
x_j)_H|=|(g(x_i), g(x_j))_B|=|(\varepsilon_ie_i,
\varepsilon_je_j)_B|$ with $\varepsilon_i, \varepsilon_j\in \{1,
-1\}$. Then $|(x_i, x_j)_H|=|(e_i, e_j)_B|=|<e_i, e_j>_B+<e_j,
e_i>_B|$.
Since $H$ is hereditary, ${\rm gl.dim}\, B\leq 2$ and so
$<e_i, e_j>_B=\dim_k \Hom_B(S_i, S_j)-\dim_k \Ext^1_B(S_i, S_j)+\dim_k
\Ext^2_B(S_i, S_j)$. Thus $|(x_i, x_j)_H|=|\dim_k \Hom_B(S_i,
S_j)-\dim_k \Ext^1_B(S_i, S_j)+\dim_k \Ext^2_B(S_i, S_j)+\dim_k
\Hom_B(S_j, S_i)-\\ \dim_k \Ext^1_B(S_j, S_i)+\dim_k \Ext^2_B(S_j, S_i)|$.
Since $i\neq j$, $\Hom_B(S_i, S_j)=\Hom_B(S_j, S_i)=0$ and so $|(x_i,
x_j)_H|=|-\dim_k \Ext^1_B(S_i, S_j)+\dim_k \Ext^2_B(S_i, S_j)-\dim_k
\Ext^1_B(S_j, S_i)+\dim_k \Ext^2_B(S_j, S_i)|$. It is known that the
quivers of cluster-tilted algebras contain no $2$-cycles (i.e.
oriented cycles of length two). Therefore, $|c_{ij}|$ is equal to the
number of arrows in $\Gamma=\Gamma(C)$ from $i$ to $j$ plus the
number of arrows in $\Gamma=\Gamma(C)$ from $j$ to $i$. But $\Gamma$
is the quiver of $B^c$ and according to  Theorem 2.6 of
\cite{ABS} (see also the proof of this theorem), the number of
arrows in the quiver of $B^c$, $Q_{B^c}$ from $i$ to $j$ is equal to
the number of arrows in the quiver of $B$, $Q_{B}$ from $i$ to $j$
plus $\dim_k \Ext^2_B(S_j, S_i)$ additional arrows. Also it is known
that the number of arrows in $Q_{B}$ from $i$ to $j$ equals the
$\dim_k \Ext^1_B(S_i, S_j)$. Hence $|c_{ij}|=\dim_k \Ext^1_B(S_i,
S_j)+\dim_k \Ext^2_B(S_j, S_i)+\dim_k \Ext^1_B(S_j, S_i)+\dim_k
\Ext^2_B(S_i, S_j)$. We claim that $|-\dim_k \Ext^1_B(S_i, S_j)+\dim_k
\Ext^2_B(S_i, S_j)-\dim_k \Ext^1_B(S_j, S_i)+\dim_k \Ext^2_B(S_j,
S_i)|=\dim_k \Ext^1_B(S_i, S_j)+\dim_k \Ext^2_B(S_j, S_i)+\dim_k
\Ext^1_B(S_j, S_i)+\dim_k \Ext^2_B(S_i, S_j)$:

It is known that in
$Q_B$ there are no oriented cycles and in $Q_{B^c}$ there are no
multiple arrows. If $\Ext^1_B(S_i, S_j)\neq 0$, then we have an arrow
form $i$ to $j$ in $Q_B$ and so in $Q_{B^c}$. Then in this case
there is no arrow from  $j$ to $i$ and no other arrow from $i$ to
$j$ in $Q_{B^c}$. Thus $\Ext^2_B(S_i, S_j)=\Ext^2_B(S_j,
S_i)=\Ext^1_B(S_j, S_i)=0$ and our claim follows. If
$\Ext^1_B(S_j, S_i)\neq 0$, a similar argument shows that our claim
holds). Finally in case $\Ext^1_B(S_i,
S_j)=\Ext^1_B(S_j, S_i)=0$ our claim is obvious.

Therefore, we have $|(x_i, x_j)_H|=|c_{ij}|$ for any $1\leq i, j\leq n$,
$i\neq j$ and our result follows.
\end{proof}

Now we are ready to prove our main Theorem.

\begin{theorem}\label{mainThm} Let $\Lambda=\End_{\mathcal{C}_H}(\widetilde{T})$ be a cluster-tilted
algebra of simply laced-Dynkin type. Suppose that $(\mathbf{x}, C)$
is the seed corresponding to $\widetilde{T}$. Then all companion
bases for $\Gamma=\Gamma(C)$ are strong.
\end{theorem}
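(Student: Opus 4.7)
The plan is to combine Theorem~\ref{Thm} with Ringel's theorem (as stated in the Remark) and Parsons's basis-independence result (Proposition 6.2 of \cite{P}). The key point is that the specific companion basis $\{x_1,\dots,x_n\}$ produced in Theorem~\ref{Thm} turns out to be strong essentially by construction, after which strongness of every companion basis is automatic.

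First I would verify that the companion basis $\Psi=\{x_1,\dots,x_n\}$ furnished by Theorem~\ref{Thm} is strong. Given any positive root $\alpha\in\Phi_H^+$, I would expand $\alpha=\sum_{i=1}^n c_i x_i$ using that $\Psi$ is a $\mathbb{Z}$-basis of $\mathbb{Z}\Phi_H$, so that
\[
g(\alpha)=\sum_{i=1}^n c_i g(x_i)=\sum_{i=1}^n c_i\varepsilon_i e_i,
\]
where $\varepsilon_i\in\{1,-1\}$ is the sign appearing in $g(x_i)=\varepsilon_i e_i$. Taking absolute values entry by entry yields $\abs(g(\alpha))=(|c_1|,\dots,|c_n|)=d_\alpha^\Psi$, giving the identity $\{d_\alpha^\Psi \mid \alpha\in\Phi_H^+\}=\{\abs(g(\alpha))\mid \alpha\in\Phi_H^+\}$.

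Next I would match the right-hand side with the dimension vectors of indecomposable $\Lambda$-modules via the Remark after Ringel's theorem. For each $\alpha\in\Phi_H^+$, $g(\alpha)\in\Phi_B$ is either in $\Phi_B^+$, or its negative is; in either case $\abs(g(\alpha))=\abs(\pm g(\alpha))$ lies in $\{\abs(y)\mid y\in \Phi_B^+\}$. Conversely, given $y\in\Phi_B^+$, the preimage $g^{-1}(y)\in\Phi_H$ is $\pm\alpha$ for some $\alpha\in\Phi_H^+$, and $\abs(g(\alpha))=\abs(y)$. Thus
\[
\{d_\alpha^\Psi \mid \alpha\in\Phi_H^+\}=\{\abs(y)\mid y\in\Phi_B^+\}=\{\mathbf{dim}\,M\mid M\in\mathrm{ind}\,\Lambda\},
\]
which is exactly the strongness of $\Psi$.

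Finally, I would invoke Proposition 6.2 of \cite{P}: for any two companion bases $\Psi,\Omega$ of $\Gamma$, one has $\{d_\alpha^\Psi\mid \alpha\in\Phi_H^+\}=\{d_\alpha^\Omega\mid \alpha\in\Phi_H^+\}$. Since the distinguished $\Psi$ above is strong, it follows that $\{d_\alpha^\Omega\mid \alpha\in\Phi_H^+\}$ also equals the set of dimension vectors of finitely generated indecomposable $\Lambda$-modules, so every companion basis $\Omega$ of $\Gamma$ is strong. The main conceptual obstacle was already handled in Theorem~\ref{Thm}, namely proving that $\{x_1,\dots,x_n\}$ satisfies the inner-product condition $|(x_i,x_j)_H|=|c_{ij}|$; after that, the present theorem is a packaging step, and the only subtlety is carefully tracking the signs $\varepsilon_i$ so that $g$ turns root expansions into dimension-vector identities under $\abs$.
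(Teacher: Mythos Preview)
Your proposal is correct and follows essentially the same route as the paper: use Theorem~\ref{Thm} to obtain the companion basis $\Psi=\{x_1,\dots,x_n\}$, compute $g(\alpha)=\sum c_i\varepsilon_i e_i$ to identify $d_\alpha^\Psi=\abs(g(\alpha))$, apply Ringel's theorem (via the Remark) to match with dimension vectors of indecomposable $\Lambda$-modules, and finish with Proposition~6.2 of \cite{P}. The only minor difference is that you argue both inclusions of the set equality explicitly, whereas the paper relies directly on the definition $\Phi_B^+=g(\Phi_H^+)$ so that $g(\alpha)\in\Phi_B^+$ automatically for $\alpha\in\Phi_H^+$; your extra case distinction is harmless but unnecessary.
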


\begin{proof} By Proposition 6.2 of \cite{P}, it is enough to show
that there exits a strong companion basis for $\Gamma$. According to
Theorem \ref{Thm}, $\Psi=\{x_1, \cdots, x_n\}\subseteq \Phi_H^+$
is a companion basis for $\Gamma$. We show that $\Psi$ is a strong
companion basis for $\Gamma$. Let $\alpha\in \Phi_H^+$.
$\alpha=c_1x_1+\cdots+c_nx_n$, with $c_i\in \mathbb{Z}$ for each $i$.
Then
$g(\alpha)=c_1g(x_1)+\cdots+c_ng(x_n)=c_1\varepsilon_1e_1+\cdots+c_n\varepsilon_ne_n$
with $\varepsilon_i\in \{1, -1\}$, for each $i$. Also $g(\alpha)\in
\Phi_B^+ $ and so by Theorem 2 of \cite{R}, $\abs(g(\alpha))=(|c_1|,
\cdots, |c_n|)$ is a dimension vector of some indecomposable
$\Lambda$-module. But $d^\Psi_\alpha=(|c_1|, \cdots, |c_n|)$ by definition and
hence $\Psi$ is a strong companion basis for $\Gamma$.
\end{proof}

It is known that different tilted algebras $B$ and $B'$ may correspond to the same cluster-tilted algebra $B\ltimes \Ext^2_B(DB, B)$. In that case, the linear bijection $g:K_0(H)\rightarrow K_0(B)$
depends on the choice of the tilted algebra $B$. In the following example we show that
different tilted algebras give a different companion basis.

\begin{example} Consider the cluster-tilted algebra $\Lambda=B\ltimes \Ext^2_B(DB, B)$
of Example \ref{ex1}. Let $B'$ be the tilted algebra given by the quiver
$$\hskip.5cm \xymatrix{
{4}&&\\
&{3}\ar[ul]_{\beta}\ar[dl]_{\gamma}&{1}\ar[l]_{\alpha}\\
{2}&&
}
\hskip.5cm$$
bound by $\alpha\beta=0$ and $\alpha\gamma=0$. Then it is easy to see that $B\ltimes \Ext^2_B(DB, B)\cong B'\ltimes \Ext^2_{B'}(DB', B')$ and
$B'=\End_{H'}(T'')$, where
$T''=(0, 0, 0, 1)\oplus (1, 0, 0, 0)\oplus (1, 1, 1, 1)\oplus (0, 1, 0, 0)$
and $H'=kQ'$, where $Q'$ is the quiver
$$\hskip.5cm \xymatrix{
&{2}&\\
{1}&{3}\ar[l]\ar[u]&{4}\ar[l]\\
}
\hskip.5cm
$$
Let $g':K_0(H')\rightarrow K_0(B')$ be the linear bijection given by
$g'(x)=(<\mathbf{dim}\, T''_i, x>_{H'})_i$. An easy calculation shows that $\abs(g'(1, 0, 0, 0))=(0, 1, 0, 0)$, $\abs(g'(0, 1, 0, 0))=(0, 0,
0, 1)$, $\abs(g'(0, 0, 1, 0))=(1, 0, 0, 0)$ and
$\abs(g'(0, 0, 1, 1))=(0, 0, 1, 0)$. Then by Theorems \ref{Thm} and \ref{mainThm}, the set $\Psi'=\{\alpha_1, \alpha_2, \alpha_3, \alpha_3+\alpha_4 \}$ is a strong companion basis for $\Lambda$.
\end{example}
%%%%%%%%%%%%%%%%%%%%%%%%%%%%%%%%%%%%%%

\section*{acknowledgements}
Special thanks are due to the referee who read this
paper carefully and made useful comments that improved the presentation of the paper.
Part of this work was carried out during a visit of the second
author to the Institut f\"{u}r Mathematik und Wissenschaftliches
Rechnen, Universit\"{a}t Graz, Austria, with the financial support of the Joint Excellence in Science and Humanities (JESH) program of the Austrian Academy of Sciences. The second author would like to thank the Austrian Academy of Sciences for its support and this host
institution for its warm hospitality. The research of the second
author was in part supported by a grant from IPM (No. 95170419).
The first author acknowledges the support of the Austrian Science Fund (FWF): W1230.

\end{document}